\theoremstyle{plain}
\newtheorem{theorem}{Theorem}
\newtheorem{corollary}{Corollary}
\theoremstyle{definition}
\newtheorem{definition}{Definition}
\theoremstyle{remark}
\numberwithin{equation}{section} 
\begin{document}

\title[ Stability  of   involutions on ...  ]{ Stability  of   involutions on Banach Algebra by  fixed point method   }%

\author{ N. Salehi }
\address{Department of Mathematics, Najafabad Branch, Islamic Azad University, Najafabad, Iran.  }%
\email{nsalehi@iaun.ac.ir}
\author {M. R. Velayati}%
\address{Department of Mathematics, Najafabad Branch, Islamic Azad University, Najafabad, Iran. }%
\email{ mvelayat@iaun.ac.ir}%

\thanks{}%
\subjclass[2010]{39B82 ;46L05;46L35 }%
\keywords{stability, Banach algebra, involution, $C^*$-algebra}%

\begin{abstract}
Using the fixed point method of the stability of the Jensen's functional equation, we proved the Hyers-Ulam-Rassias stability and super stability of involution on Banach algebra  and find some conditions that with them a Banach algebra with approximate involution is  $C^*$-algebra. 
\end{abstract}
\maketitle
\section{Introduction}
 The concept of stability for a functional equation arises when one replaces a functional equation by an inequality which acts as a perturbation of the equation. In 1940 S.M. Ulam \cite{u64} posed the first stability problem in the following year Hyers \cite{h41} gave a partial affirmative answer to the question  of Ulam.
 He has proved the following result.  Let's $E_1$ and $E_2$ are Banach space, for all $\epsilon>0$ there exists some $\delta>0$ such that for all  map  $f:E_1\to E_2$ that satisfies in
 $$\|f(x+y)-f(x)-f(y)\|\leq \delta,$$  for all $x,y\in E_1$, then there exits a unique additive map  $l:E_1\to E_2$ with 
 $$\|f(x)-l(x)\|\leq \epsilon$$
 
 for all $x,y\in E_1$.
 Hyers theorem was generalized by  Aoki\cite{a50} for additive mappings and by M. Rassias \citen{r78} for linear mapping by considering and unbounded Couchy difference.
In 1994, G\v{a}vruta\cite{g94} following the spirit of the approach of Th. M. Rassias proved the stability of the linear mapping  between Banach spaces for an arbitrary unbounded Cauchy difference.

In 1996, Isac and Rassias\cite{i96} were the first to provide applications of stability theory of functional equations for the proof of new fixed point theorems with applications.By fixed point methods of  several  functional equations have been extensively investigated by a number of authors( see  \cite{ch84,l2016}).

In 2003, L. C\u{a}dariu and V. Radu proved the Hyers-Ulam-Rassias stability of the Jensen's functional equation. 

 In this paper, we have used the technique of \cite{cadariu2003fixed} and obtained following results for stability of involution on Banach algebra by fixed point method. This is, if $f:A\to A$ is a approximately involution on Banach algebra $A$, then there exists an involution $I:A\to A$ which is near to $f$ more over, under some conditions on $f$, the algebra $A$ has  $C^*$-algebra structure with involution $I$.  

\begin{definition}
Let $X$ be a set. A function $d: X^2\to [0,\infty]$ is called a generalized metric on $X$ if and only if $d$
satisfies

(M1) $d(x, y)=0 $ if and only if $x= y$;

(M2) $d(x, y)=d(y, x)$, for all $x,y\in X$;

(M3) $d(x, z)\leq d(x, y)+ d(y, z)$ for all $x, y, z \in X$.
\end{definition}
We remark that the only difference between the generalized metric and the usual
metric is that the range of the former is permitted to include the infinity.
We now introduce one of the fundamental results of the fixed point theory.

\begin{theorem}[The alternative of fixed point \cite{diaz1968fixed,rus2008fixed}]\label{t0}
 Let (X,d) be a generalized complete metric space. Assume that
$T: X \to X$ is a strictly contractive operator with the Lipschitz constant $L < 1$. Then for each given $x\in X$, either $d(T^n(x),T^{n+1}(x))=\infty$ for all $n\geq 0$, or there exists a nonnegative integer $n_0$ such that

(i) $d(T^{n}(x),T^{n+1}(x)) < \infty$ for all
$n\geq n_0$;

(ii)  the sequence $\{T^n(x)\}$ converges to a fixed point $y^{\ast}$ of $T$;

(iii) $y^{\ast}$ is the unique fixed point of $T$ in $X^{\ast}=\{y\in X| d(T^{n_0}(x),y)<\infty\}$;

(iv) If $y \in X^{\ast}$, then
$$d(y, y^{\ast})\leq \frac 1{1-L} d(T(y),y).$$
\end{theorem}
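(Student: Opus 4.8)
The plan is to fix a point $x\in X$ and study the sequence of consecutive distances $a_n:=d\big(T^n(x),T^{n+1}(x)\big)$. Strict contractivity immediately gives $a_{n+1}=d\big(T(T^n x),T(T^{n+1}x)\big)\le L\,a_n$, so the dichotomy is built into the operator: either $a_n=\infty$ for every $n\ge 0$, which is the first alternative, or there is a least index $n_0$ with $a_{n_0}<\infty$, and then by induction $a_n\le L^{\,n-n_0}a_{n_0}<\infty$ for all $n\ge n_0$, which is exactly (i). Take this $n_0$ for the rest of the argument.

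Next I would check that $\{T^n(x)\}_{n\ge n_0}$ is Cauchy. For $m>n\ge n_0$, repeated use of (M3) together with the geometric bound above yields
\[
d\big(T^n x,T^m x\big)\le \sum_{k=n}^{m-1}a_k\le a_{n_0}\sum_{k=n}^{\infty}L^{\,k-n_0}=\frac{L^{\,n-n_0}}{1-L}\,a_{n_0},
\]
which tends to $0$ as $n\to\infty$ since $L<1$; so by completeness $T^n(x)\to y^{\ast}$ for some $y^{\ast}\in X$. Because $T$ is Lipschitz it is continuous: $d\big(Ty^{\ast},T^{n+1}x\big)\le L\,d\big(y^{\ast},T^n x\big)\to 0$, and comparing this with $d\big(y^{\ast},T^{n+1}x\big)\to 0$ via (M1)--(M3) forces $d(y^{\ast},Ty^{\ast})=0$, i.e. $Ty^{\ast}=y^{\ast}$; this is (ii). Letting $m\to\infty$ in the displayed inequality with $n=n_0$ also gives $d\big(T^{n_0}x,y^{\ast}\big)\le \frac{1}{1-L}a_{n_0}<\infty$, so in particular $y^{\ast}\in X^{\ast}$.

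For the uniqueness claim (iii), suppose $z\in X^{\ast}$ is another fixed point of $T$. Then $d(y^{\ast},z)\le d\big(y^{\ast},T^{n_0}x\big)+d\big(T^{n_0}x,z\big)<\infty$, both summands being finite (the first by the previous step, the second by the definition of $X^{\ast}$). Contractivity then gives $d(y^{\ast},z)=d(Ty^{\ast},Tz)\le L\,d(y^{\ast},z)$ with the right-hand distance finite, so $(1-L)d(y^{\ast},z)\le 0$ and $z=y^{\ast}$. Finally, for the error estimate (iv), take $y\in X^{\ast}$; we may assume $d(Ty,y)<\infty$ (otherwise there is nothing to prove). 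As before $d(y,y^{\ast})\le d\big(y,T^{n_0}x\big)+d\big(T^{n_0}x,y^{\ast}\big)<\infty$, and then
\[
d(y,y^{\ast})\le d(y,Ty)+d(Ty,Ty^{\ast})\le d(y,Ty)+L\,d(y,y^{\ast}),
\]
which rearranges to $d(y,y^{\ast})\le \frac{1}{1-L}\,d(Ty,y)$.

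The argument is essentially the proof of the Banach contraction principle, so no individual step is deep; the one thing that needs care — and the reason the set $X^{\ast}$ appears in the statement — is the bookkeeping with the value $+\infty$. Every cancellation of the type ``$(1-L)t\le 0\Rightarrow t=0$'' must be carried out only after the distance $t$ has been shown to be finite, and each such finiteness claim is obtained by routing through the point $T^{n_0}(x)$ and invoking either the finiteness of $a_{n_0}$ or membership in $X^{\ast}$. I expect keeping that bookkeeping honest to be the only genuinely non-routine point.
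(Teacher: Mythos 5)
Your proof is correct and is the standard Diaz--Margolis argument; the paper itself states this theorem as a cited result from the literature (Diaz--Margolis, Rus) and gives no proof, so there is nothing internal to compare against. Your handling of the one delicate point — deferring every cancellation $(1-L)t\le 0\Rightarrow t=0$ until the finiteness of $t$ has been secured by routing through $T^{n_0}(x)$ — is exactly what the generalized-metric setting requires, and the rest is the usual Banach contraction bookkeeping.
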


\section{MAIN RESULTS}
Throughout this section, let $(E^k,\|\cdot\|_k): k\in\mathbb{N})$ be a multi-Banach algebra  and  $\mathbb{T}^1=\{z\in \mathbb{C}: \|z\|=1\}$ and for each $n_0\in \mathbb{ N}$ suppose $\mathbb{T}_{\frac{1}{n_0}}^1:=\{e^{i\theta}: 0\leq \theta\leq\frac{1}{n_0}\}$.  For convenience, we use the following abbreviation for a given mapping $f:E\to E$
$$D_{\lambda}f(x,y)=2 \overline{\lambda}f(\frac{x+y}{2})-fT( x)-fT( y),$$
for $x,y\in E$ and $\lambda \in \mathbb{C}$. 

Let us  recall some necessary definitions.

Let $A$ be an algebra over $\mathbb{C}$, then an involution on $A$ is a mapping $*:A\to A$ with 
$$*(a)=a^*$$
such that:

(i)$a^{**}=a$ for all $a\in A$.

(ii)$T( a+\mu b)^*=\overline{\lambda}a^*+\overline{\mu}b^*$ for all $a,b\in A$ and $\lambda,\mu\in \mathbb{C}$.

(iii)$(ab)^*=b^*a^*$ for all $a,b\in A$.\\
A $C^*$-algebras is a (non-zero) Banach-algebra with an involution such that 
$$\|a^*a\|=\|a\|^2.$$

We shall use the technics in \cite{maj2013} to prove the following two theorems.

\begin{theorem}\label{t1}
	suppose  $E$  be  Banach algebra  and $\phi:E^{2}\to [0,\infty)$ is a given function and there exists constant  $L$, $0 < L < 1$, such that
	\begin{equation}\label{e1}
	\phi (q_ix,q_iy)\leq q_i L\phi(x,y)
	\end{equation}
	for all $x \in E$ , where  $q_0=2$ and $q_1=\frac{1}{2}$ 
	if  $f:E\to E$  satisfies $f(0)=0$ , such that
	\begin{equation}\label{e2}
	\|D_{\lambda }f(x,y) \| \leq \phi(x,y)
	\end{equation}
	\begin{equation}\label{e3}
	\|f(xy)-f(y)f(x)\|\leq \phi(x,y)
	\end{equation}
		\begin{equation}\label{e4}
\lim_m q_i^{-m}f(q_i^m\lim_n q_i^{-n}f(q_i^{n}x))=x
	\end{equation}
	for all $x,y\in E$ and $\lambda \in \mathbb{T}_{\frac{1}{n_0}}^1$, then there is a unique involution mapping $I:E\to E$ 	which satisfies 
	\begin{equation}\label{e5}
	\|I(x)-f(x)\|\leq \frac{L^{1-i}}  {1- L} \phi (x,0)
	\end{equation}
	moreover if 
	\begin{equation}\label{e6}
	\Bigl|  \|xf(x)\|-\|x\|^2 \Bigr|\leq \phi (x,x)
		\end{equation}
	 for all  $x \in E$, then $E$ is a $C^*$-algebra with involution $x^*=I(x)$, for all $x\in E$. 
	
\end{theorem}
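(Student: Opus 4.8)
The plan is to apply the alternative of fixed point theorem (Theorem~\ref{t0}) to a suitably chosen operator on a space of functions, following the $C\check{}$ad\v{a}riu--Radu scheme for the Jensen equation. First I would set $S=\{g:E\to E \mid g(0)=0\}$ and equip it with the generalized metric $d(g,h)=\inf\{c>0 : \|g(x)-h(x)\|\le c\,\phi(x,0)\ \text{for all } x\in E\}$, which is well known to be complete. The contraction is $(Tg)(x)=q_i^{-1}g(q_i x)$ (so $Tg(x)=\tfrac12 f(2x)$ when $i=0$ and $Tg(x)=2f(x/2)$ when $i=1$); the defining inequality \eqref{e1} together with \eqref{e2} applied with $\lambda=1$ and $y=0$ (using $f(0)=0$) gives $\|f(x)-q_i^{-1}f(q_ix)\|\le q_i^{-1}\phi(q_ix,0)\le L\,\phi(x,0)$ in the appropriate normalization, hence $d(f,Tf)\le L^{1-i}<\infty$, and \eqref{e1} shows $d(Tg,Th)\le L\,d(g,h)$. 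By Theorem~\ref{t0} the sequence $T^n f$ converges pointwise to the unique fixed point $I$ with $d(I,f)\le \tfrac{1}{1-L}d(Tf,f)\le \tfrac{L^{1-i}}{1-L}$, which is exactly \eqref{e5}. That $I$ is a fixed point means $I(q_ix)=q_i I(x)$, i.e. $I$ is homogeneous along the sequence $q_i^n$.

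Next I would verify that $I$ is genuinely an involution. Passing to the limit in \eqref{e2}: since $\phi(q_i^nx,q_i^ny)\le (q_iL)^n\phi(x,y)$, multiplying the inequality $\|D_\lambda f(q_i^nx,q_i^ny)\|\le\phi(q_i^nx,q_i^ny)$ by $q_i^{-n}$ and letting $n\to\infty$ yields $D_\lambda I(x,y)=0$, i.e. $2\overline{\lambda}I\!\left(\tfrac{x+y}{2}\right)=I(x)+I(y)$ for all $\lambda\in\mathbb{T}^1_{1/n_0}$ (here one uses that $fT$ is meant to be $f$ composed appropriately; reading $fT(x)$ as $f(x)$, with $\lambda=1$ first to get additivity of $I$ from the Jensen equation plus $I(0)=0$, then reinserting general $\lambda$). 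A standard argument — additivity plus the $\mathbb{T}^1_{1/n_0}$-homogeneity, extended to all of $\mathbb{T}^1$ and then by density/continuity to all of $\mathbb{C}$ — gives conjugate-linearity, which is axiom (ii). For axiom (iii), $\|I(xy)-I(y)I(x)\|$ is controlled by taking limits in \eqref{e3}: replace $x,y$ by $q_i^nx,q_i^ny$, divide by $q_i^{n}$ or $q_i^{2n}$ as forced by the homogeneity of $I$, and use \eqref{e1} to kill the right-hand side. Axiom (i), $I(I(x))=x$, is precisely the content of hypothesis \eqref{e4} once one knows $I(x)=\lim_n q_i^{-n}f(q_i^nx)$.

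Finally, for the $C^*$ conclusion, I would combine the involution property with \eqref{e6}. Replacing $x$ by $q_i^nx$ in \eqref{e6}, dividing by $q_i^{2n}$, and letting $n\to\infty$ gives $\|xI(x)\|=\|x\|^2$ after using $I(q_i^nx)=q_i^nI(x)$ and \eqref{e1} to send $\phi(q_i^nx,q_i^nx)/q_i^{2n}\to 0$; replacing $x$ by $x^*=I(x)$ and using $I(I(x))=x$ turns this into $\|I(x)x\|=\|I(x)\|^2=\|x^*\|^2$, and together with the $C^*$-norm identity $\|x^*x\|=\|xx^*\|$ (valid in any Banach $*$-algebra satisfying $\|a^*a\|=\|a\|^2$, or here directly) one extracts $\|x^*x\|=\|x\|^2$, so $(E,\|\cdot\|,I)$ is a $C^*$-algebra.

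I expect the main obstacle to be the bookkeeping in the limit arguments: the hypotheses as written mix the two cases $i=0,1$ and the notation ($fT$, $D_\lambda f$ with a bar on $\lambda$, the roles of $x$ and $y$ swapped in \eqref{e3}) must be read consistently so that each normalization factor $q_i^{-n}$ or $q_i^{-2n}$ matches the growth $\phi(q_i^nx,\cdot)\le (q_iL)^n\phi$. The delicate point is getting full $\mathbb{C}$-conjugate-linearity from homogeneity on only the small arc $\mathbb{T}^1_{1/n_0}$ plus additivity — this needs the standard trick of iterating rotations by $e^{i\theta}$ with $\theta\le 1/n_0$ to cover the whole circle, then using $\mathbb{Q}$-linearity (automatic from additivity) and continuity of $I$ (which follows from \eqref{e5} once $\phi(x,0)$ is controlled near $0$) to pass from $\mathbb{T}^1\cdot\mathbb{Q}$ to all scalars. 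Everything else is routine passage-to-the-limit in the given inequalities.
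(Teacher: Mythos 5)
Your overall architecture is exactly the paper's: the same space $X=\{g:E\to E\mid g(0)=0\}$ with the generalized metric $d(g,h)=\inf\{c:\|g(x)-h(x)\|\le c\,\phi(x,0)\}$, the same operator $T(g)(x)=q_i^{-1}g(q_ix)$ with Lipschitz constant $L$ and $d(Tf,f)\le L^{1-i}$, the same identification $I(x)=\lim_n q_i^{-n}f(q_i^nx)$ via Theorem~\ref{t0}, and the same passage-to-the-limit arguments in \eqref{e2}, \eqref{e3}, \eqref{e4}, \eqref{e6} for additivity, anti-multiplicativity, $I\circ I=\mathrm{id}$, and the norm identity. (In the final step you are in fact slightly more careful than the paper, which stops at $\|xI(x)\|=\|x\|^2$ without converting it to $\|x^*x\|=\|x\|^2$.)

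The one place you genuinely depart from the paper is also the one place with a gap: the extension of $I(\lambda x)=\overline{\lambda}I(x)$ from $\lambda\in\mathbb{T}^1$ to all of $\mathbb{C}$. You propose $\mathbb{Q}$-homogeneity (from additivity) plus ``continuity of $I$, which follows from \eqref{e5}.'' It does not: \eqref{e5} only bounds $\|I(x)-f(x)\|$ pointwise, and $f$ is not assumed continuous, so $I$ --- an a priori arbitrary additive map --- need not be continuous, and $\mathbb{Q}$-linearity alone cannot be upgraded to $\mathbb{R}$-linearity. The paper closes this step by a purely algebraic device that needs no continuity: for $\alpha\in(0,\infty)$ pick an integer $n>\alpha$ and write $\alpha=\tfrac12(\alpha_1+\alpha_2)$ with $\alpha_{1,2}=\alpha\pm i\sqrt{n^2-\alpha^2}\in n\mathbb{T}^1$; homogeneity on $n\mathbb{T}^1$ follows from the $\mathbb{T}^1$ case together with additivity (integer multiples), and since $\overline{\alpha_1}+\overline{\alpha_2}=2\alpha$ one gets $I(\alpha x)=\alpha I(x)$ directly, after which the polar decomposition $\lambda=|\lambda|e^{i\theta}$ finishes the job. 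You should substitute this (or an equivalent continuity-free argument) for your density step. A further caveat you share with the paper rather than introduce: in the steps normalized by $q_i^{-2n}$ (anti-multiplicativity and \eqref{e6}), the hypothesis \eqref{e1} only gives $q_i^{-2n}\phi(q_i^nx,q_i^ny)\le (L/q_i)^n\phi(x,y)$, which tends to $0$ automatically for $i=0$ but requires $L<\tfrac12$ when $i=1$; neither you nor the paper addresses this.
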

\begin{proof}
	If we define
	$$X=\{ g: E\to E | \ \ g(0)= 0\}$$
	and introduce a generalized metric on $X$ as follows:
	\begin{align*} d(g,h)= \inf\{ {c\in [0,\infty]} :  \| g(x)- h(x)\| \leq c\phi(x,0), \ \   {\rm for\ all\ } x \in E\}
	\end{align*}
	It is easy to show that (X, d) is complete.
	We define an operator $T: X \to X$ by
	$$T( g)(x)=\frac{ g(q_ix)}{q_i}$$
	for all $x \in E$. First, we assert that $T$ is strictly contractive on $X$. Given $g, h\in X$, let $c\in [0,\infty]$ be an arbitrary constant with $d(g, h)\leq c$, i.e.,
	$$\| g(x)- h(x)\| \leq c\phi(x,0),$$
	for all $ x\in E$. If we replace $x$ in the last inequality with $q_ix$ and make use of (\ref{e1}),
	then we have
	\begin{align*} \| T g(x)- T h(x)\|=q_i^{-1} \ \| g(q_ix)- h(q_ix)\|  \leq q_i^{-1}\  c\phi (q_ix,0)\leq Lc\phi(x,0)
	\end{align*}
	for every $ x_1,\ldots ,x_k\in E$, i.e., $d(T(g),T(h))\leq Lc$. Hence, we conclude that $d(T(g),T(h) )\leq L d(g,h)$  for any $g, h\in X$.

	Next, we assert that $d(T( f), f ) < \infty$. If $ i=0$ we replace $x$ with  $2t$ , $y$ with $y$ and $\lambda$  with $1$  in (\ref{e2})  then (\ref{e1}) establishes
	\begin{align*}
	\|2 f( \frac{2t}{2})-f(2t)\|\leq \phi(2t,0) 
	&\Rightarrow	\| f(t)-\frac{1}{2}f(2t)\|\leq \frac{1}{2}\phi(2t,0)  \leq L\phi(t,0)
	\\ &\Rightarrow \| T (f)( x)-f(x)\|\leq L\phi(x,0)
	\end{align*}
	for any $x \in E$, i.e.,
	\begin{equation}\label{e7}
	d(T (f),f)\leq L\leq\infty
	\end{equation}
	
	 If $ i=1$  we replace $y$ with $0$ and  $\lambda$ with $1$  in (\ref{e2})  then (\ref{e1}) establishes
	\begin{align*}
	\|2 f( \frac{x}{2})-f(x)\|\leq \phi(x,0) 
	\Rightarrow \|T( f( x))-f(x)\|\leq \phi(x,0)
	\end{align*}
	for any $x \in E$, i.e.,
	\begin{equation}\label{e7-1}
	d(T( f),f)\leq 1\leq\infty
	\end{equation}
	Now, it follows from Theorem  \ref{t0} (ii) that there exists a function $I:E\to E$
	with $I(0)=0$, which is a fixed point of $T$, such that $T^n (f) \to I$, i.e.,
	\begin{equation}\label{e8}
	I(x)=\lim_{n\to\infty} q_i^{-n}f(q_i^{n} x)
	\end{equation}
	for all $x\in E_1$.
	Since the integer $n_0$ of Theorem  \ref{t0} is $0$ then $f\in X^{\ast}$ which
	$$X^{\ast}=\{ y\in X:\ \ d(T^{n_0} (f),y)<\infty\}.$$
	By Theorem  \ref{t0} (iv) and (\ref{e7}) we obtain
	$$d(f,I)\leq \frac 1 {1-L} d(T( f), f)\leq \frac L {1-L}$$
	i.e., the inequality (\ref{e5}) is true for all $x \in E$.
	
By inequality (\ref{e1}), we have 
\begin{equation}\label{e9}
\lim_n q_i^{-n}\phi(q_i^{n}x,q_i^{n}y)=0
\end{equation}
	we replace $x$ with $2q_i^{n}x$ and $y$ with $2q_i^{n}y$ in (\ref{e2})consequently,
$$
\|D_{\lambda}f(2q_i^{n}x,2q_i^{n}y)\| \leq \phi(2q_i^{n}x,2q_i^{n}y)
$$
we replace  $\lambda=1$ in the last inequality and then	product	 it by $2q_i^{-n}$,	consequently
	 			\begin{align*}
	\Bigl\|\frac{1}{2q_i^{n}}f(q_i^{n}({x+y}))-\frac{1}{2q_i^{n}}f(2q_i^{n}x)-\frac{1}{2q_i^{n}}f(2q_i^{n}y)\Bigr\| \leq \frac{1}{2q_i^{n}}\phi(2q_i^{n}x,2q_i^{n}y)
	\end{align*}
		passing to the limit yields, 
	 			 $$\lim_{n\to\infty} 	\Bigl\|\frac{1}{2q_i^{n}}f(q_i^{n}({x+y}))-\frac{1}{2q_i^{n}}f(2q_i^{n}x)-\frac{1}{2q_i^{n}}f(2q_i^{n}y)\Bigr\|       =0	$$	
		 		 
		  then by 		 (\ref{e9})
		 $$\|I({x+y})-I(x)-I(y)\|=0$$
		 		 		we get for all $x,y\in X$ the equality
		 	$$I({x+y})=I(x)+I(y)$$
		 		there for,I is cauchy additive.
		 		If we replace $x$ with $y$ in (\ref{e2}) and $\lambda$ with $1$ , then (\ref{e2}) establishes
		 			$$\| 2\overline{\lambda}f(x)-2f(\lambda x)\|\leq \phi(x,x)\Rightarrow \| \overline{\lambda}f(x)-f(\lambda  x)\|\leq \frac1 2\phi(x,x)$$
	 	we replace  $x$ with $q_i^{n}x$ 
		 		 in the last inequality and product it by $q_i^{-n}$, we have
\begin{align*}
q_i^{-n}\| \overline{\lambda}f(q_i^{n}x)-f(q_i^{n}\lambda x)\| \leq  \frac1 2q_i^{-n}\phi(q_i^{n}x,q_i^{n}x)
\end{align*}
		 			consequently,passing to the limit yields, 
	 			 $$\lim_{n\to\infty} q_i^{-n}\| \overline{\lambda}f(q_i^{n}x)-f(q_i^{n}\lambda x)\| =0$$	
		 		 		 		 then by (\ref{e9})
		 		 $$\| \overline{\lambda}I(x)-I(\lambda  x)\|=0$$
		 				 		we get for all $x\in X$ and $\lambda \in \mathbb{T}_{\frac{1}{n_0}}^1$ the equality
		 		$$I( \lambda x)=\overline{\lambda}I(x)$$

		 		we show the last equality for all $\lambda \in \mathbb{C}$.
		 		It is enough to show that the last inequality is true for all $t\in (0,\infty)$, because if $\lambda\in\mathbb{C}$ then there exists $\theta\in [0,2\pi]$ such that $\lambda =|\lambda|e^{i\theta}$ and we have 
		 		$$I(\lambda x)=I(|\lambda| e^{i\theta}x)=|\lambda|I(e^{i\theta}x)	=|\lambda| e^{-i\theta}I(x)=\overline{\lambda}I(x),$$
		 		for all $x\in E$.

		 		 Suppose $\lambda\in\mathbb{T}^1$ ,then
there exists $\theta\in [0,2\pi]$ such that $\lambda =e^{i\theta}$. For $\lambda_1=e^{\frac{i\theta}{n_0}}$, we have $\lambda_1\in\mathbb{T}_{\frac{1}{n_0}}^1$ and
$$I( \lambda x)=I(\lambda_1^{n_0}x)=\overline{\lambda_1}^{n_0}I(x)=\overline{\lambda}I(x),$$
for all $x\in E$. If $\lambda\in n\mathbb{T}^1=\{nz\in \mathbb{C}: \|z\|=1\}$, for some $n\in \mathbb{ N}$, then by additivity of $I$ we have
$$\forall x\in E,\qquad  I( \lambda x)=\overline{\lambda}I(x).$$
If $t\in(0,\infty)$ then by  archimedean property of $\mathbb{C}$, there exists  $n\in (0,\infty)$ such that the point $(t,0)$ lies in the interior of circle with center at origin  and radius $n$. For $\alpha_1:=\alpha+\sqrt{n^2-\alpha^2}$ and $\alpha_2:=\alpha-\sqrt{n^2-\alpha^2}$, we have $\alpha=\frac{\alpha_1+\alpha_2}{2}$ and $\alpha_1,\alpha_2\in n\mathbb{T}^1$. So 
$$I(\alpha x)=I(\frac{\alpha_1+\alpha_2}{2} x)=\frac{\overline{\alpha_1}}{2}I(x)+\frac{\overline{\alpha_2}}{2}I(x)=\overline{\alpha}I(x)=\alpha I(x),$$
for all $x\in E$.
		 		Therefore, $I:E\to E$ is conjugate   $\mathbb{C}$-linear.
		 		we replace   $x$ with $q_i^{n}x$ and $y$ with $q_i^{n}y$ in (\ref{e3}) then product	 it by $q_i^{-2n}$	we have
	 					 		\begin{align*}
q_i^{-2n}\Bigl\|f(q_i^{2n}xy)-f(q_i^{n}y)f(q_i^{n}x)\|  \leq q_i^{-2n}\phi(q_i^{n}x,q_i^{n}y)
		 		\end{align*}
	 			 		consequently,passing to the limit yields, 
	 			 $$\lim_{n\to\infty}q_i^{-2n}\Bigl\|f(q_i^{2n}xy)-f(q_i^{n}y)f(q_i^{n}x)\|=0	$$	 
	 					 		then by (\ref{e9})
		 		$$\|I(xy)-I(y)I(x)\| =0$$
		 			 		we get for all $x,y\in X$ the equality
		 		$$ I(xy)=I(y)I(x)$$
		 		and from (\ref{e4}) we have
		 		$$I(I(x))=\lim_mq_i^{-m}f(q_i^m\lim_n q_i^{-n}f(q_i^{n}x))=x$$
		 		
		 		Further,let us show the unicity of $I$.In fact, assume the existence of another such involution $I'$ satisfies
(\ref{e5}) hence $I'(q_i^{n}x)=q_i^{n}I'(x)$ and we replace $x$ with $q_i^{n}x$  in (\ref{e5}) we have
 \begin{align*}
\|q_i^{-n}I'(q_i^{n}x)-q_i^{-n}f(q_i^{n}x)\| =q_i^{-n}\|I'(q_i^{n}x)-f(q_i^{n}x)\| \leq q_i^{-n}\frac{L^{1-i}} {1- L} \phi (q_i^{n}x,0)
\end{align*}
		 			consequently,passing to the limit yields, 
	 			 $$\lim_{n\to\infty} \|q_i^{-n}I'(q_i^{n}x)-q_i^{-n}f(q_i^{n}x)\| =q_i^{-n}\|I'(q_i^{n}x)-f(q_i^{n}x)\|       =0	$$	 
	 				 then by (\ref{e9})
		 		$\|I(x)-I'(x)\| =0$
		 			 		Hence $I=I'$ for all $x\in E$ .
		 		If I satisfies (\ref{e6})we replace  $x$ with $q_i^{n}x$  in (\ref{e6}) and	product	 it by $q_i^{-2n}$ we have 
\begin{align*}
	 q_i^{-2n}\Bigl|  \|q_i^{n}x_1f(q_i^{n}x)\|-\|q_i^{n}x\|^2\Bigr| \leq q_i^{-2n}\phi (q_i^{n}x,q_i^{n}x)
\end{align*}
		 	 	consequently,passing to the limit yields, 
	 			 $$\lim_{n\to\infty} q_i^{-2n}\Bigl|  \|q_i^{n}x_1f(q_i^{n}x)\|-\|q_i^{n}x\|^2\Bigr|  =0	$$	 
	 				 then by (\ref{e9})
		 	 	 $$\Bigl|\|xI(x)\|-\|x\|^2 \Bigr|=0$$
		 	 	 	
		 	 	 	It is enough to show that the last inequality is true for all $t\in (0,\infty)$, because if $\lambda\in\mathbb{C}$ then there exists $\theta\in [0,2\pi]$ such that $\lambda =|\lambda|e^{i\theta}$ and we have 
		 	 	 $$I( \lambda x)=I(|\lambda| e^{i\theta}x)=|\lambda|I(e^{i\theta}x)	=|\lambda| e^{-i\theta}I(x)=\overline{\lambda}I(x),$$
		 	 	 for all $x\in E$.	 	 		 	  		 	 
		 	 Hence $\|xI(x)\|=\|x\|^2$ for all $x\in E$. then $E$ is a $C^*$-algebra with involution $x^*=I(x)$, for all $x\in E$.
\end{proof}		 	
\begin{corollary}
	Let $E$  be  Banach algebra, $r\in (0,1)$, $\theta\in [0,\infty) $ and $f:E\to E$ with $f(0)=0$, such that
	\begin{align*}
&\|D_{\lambda}f(x,y) \|\leq \theta(\|x\|^r +\| y\|^r),\\[1mm]&
	\left\|f(xy)-f(y)f(x)\right\|\leq \theta (\|x\|^r +\| y\|^r),\\[1mm]&
	\lim_m 2^{-m}f(2^m\lim_n 2^{-n}f(2^{n}x))=x.
	\end{align*}
	for all $x,y \in E$ and $\lambda\in \mathbb{T}_{\frac{1}{n_0}}^1$, then there is a unique involution mapping $I:E\to E$ 	which satisfies 
	\begin{equation*}
	\|I(x)-f(x)\|\leq\frac {2\theta} {2-2^r} \|x\|^r
	\end{equation*}
	moreover if 
	\begin{equation*}
	\Bigl|  \|xf(x)\|-\|x\|^2 \Bigr|\leq{2\theta}  \|x\|^r
	\end{equation*}
	for all  $x \in E$, then $E$ is a $C^*$-algebra with involution $x^*=I(x)$, for all $x\in E$. 
	
\end{corollary}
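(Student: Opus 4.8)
The plan is to obtain the corollary as the special case of Theorem \ref{t1} in which $\phi(x,y):=\theta(\|x\|^{r}+\|y\|^{r})$ and the dilation index is $i=0$, that is, $q_{i}=q_{0}=2$. The first step is to verify the contraction hypothesis \eqref{e1} for this $\phi$: for all $x,y\in E$,
\[
\phi(2x,2y)=\theta\bigl(2^{r}\|x\|^{r}+2^{r}\|y\|^{r}\bigr)=2^{r}\phi(x,y)=2\cdot 2^{r-1}\phi(x,y),
\]
so \eqref{e1} holds with Lipschitz constant $L:=2^{r-1}$, and since $r\in(0,1)$ we indeed have $0<L<1$. The alternative choice $q_{1}=\tfrac12$ would force $L\ge 2^{1-r}>1$, which is why only the index $i=0$ is admissible here; this also explains why the limit hypothesis of the corollary is written with $2^{m}$ and $2^{n}$. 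With this choice the three structural assumptions \eqref{e2}, \eqref{e3}, \eqref{e4} of Theorem \ref{t1} are precisely the three displayed inequalities assumed in the corollary.

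The second step is simply to invoke Theorem \ref{t1}: there is a unique involution $I:E\to E$ with
\[
\|I(x)-f(x)\|\le\frac{L^{1-i}}{1-L}\,\phi(x,0)=\frac{2^{r-1}}{1-2^{r-1}}\,\theta\|x\|^{r}=\frac{2^{r}\theta}{2-2^{r}}\,\|x\|^{r}\le\frac{2\theta}{2-2^{r}}\,\|x\|^{r},
\]
where the last inequality uses $2^{r}\le 2$ (valid since $r\le 1$) and $2-2^{r}>0$; this is exactly the asserted estimate. For the $C^{*}$-part, note that $\phi(x,x)=2\theta\|x\|^{r}$, so the inequality $\bigl|\,\|xf(x)\|-\|x\|^{2}\,\bigr|\le 2\theta\|x\|^{r}$ assumed in the corollary is literally hypothesis \eqref{e6} of Theorem \ref{t1} for this $\phi$; hence the final clause of Theorem \ref{t1} applies verbatim and gives that $E$ is a $C^{*}$-algebra under $x^{*}=I(x)$.

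I do not anticipate a genuine obstacle, since the corollary is a direct specialization. The only points requiring any care are the ones highlighted above: choosing the correct dilation index $i=0$, checking that the resulting constant $L=2^{r-1}$ lies in $(0,1)$ precisely because $r<1$, and performing the elementary simplification $\frac{2^{r-1}}{1-2^{r-1}}=\frac{2^{r}}{2-2^{r}}\le\frac{2}{2-2^{r}}$ that casts the bound delivered by Theorem \ref{t1} into the stated form.
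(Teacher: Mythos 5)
Your proposal is correct and is essentially the paper's own proof: the paper likewise obtains the corollary by substituting $\phi(x,y)=\theta(\|x\|^{r}+\|y\|^{r})$ and $L=2^{r-1}$ into Theorem~\ref{t1}, though it omits the details you supply (verifying \eqref{e1}, noting that $r<1$ forces the choice $i=0$, and simplifying $\frac{2^{r-1}}{1-2^{r-1}}=\frac{2^{r}}{2-2^{r}}\le\frac{2}{2-2^{r}}$ to match the stated constant). No further changes are needed.
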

\begin{proof}
	We put $$\phi(x,y):=\theta (\|x\|^r +\| y\|^r),$$ for all $x,y\in E$ and $L=2^{r-1}$ in theorem (\ref{t1}), then as a result, the sentence is obtained.
\end{proof}
\begin{corollary}
	Let $E$  be  Banach algebra, $r>1$, $\theta\in [0,\infty) $ and $f:E\to E$ with $f(0)=0$, such that
	\begin{align*}
	&\|D_{\lambda}f(x,y) \|\leq \theta(\|x\|^r +\| y\|^r),\\[1mm]&
	\left\|f(xy)-f(y)f(x)\right\|\leq \theta (\|x\|^r +\| y\|^r),\\[1mm]&
	\lim_m 2^{m}f(2^{-m}\lim_n 2^{n}f(2^{-n}x))=x.
	\end{align*}
	for all $x,y \in E$ and $\lambda\in \mathbb{T}_{\frac{1}{n_0}}^1$, then there is a unique involution mapping $I:E\to E$ 	which satisfies 
	\begin{equation*}
	\|I(x)-f(x)\|\leq\frac {2^{r}\theta} {2-2^r} \|x\|^r
	\end{equation*}
	moreover if 
	\begin{equation*}
	\Bigl|  \|xf(x)\|-\|x\|^2 \Bigr|\leq{2\theta}  \|x\|^r
	\end{equation*}
	for all  $x \in E$, then $E$ is a $C^*$-algebra with involution $x^*=I(x)$, for all $x\in E$. 
	
\end{corollary}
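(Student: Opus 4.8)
The plan is to obtain this statement as the $r>1$ specialization of Theorem~\ref{t1}, using the contractive branch $i=1$ (so $q_i=q_1=\frac12$) in place of the $i=0$ branch that served the previous corollary. First I would put
\[
\phi(x,y):=\theta\bigl(\|x\|^r+\|y\|^r\bigr),\qquad x,y\in E,
\]
and check hypothesis (\ref{e1}): one has $\phi\bigl(\tfrac12 x,\tfrac12 y\bigr)=2^{-r}\phi(x,y)=\tfrac12\cdot 2^{1-r}\,\phi(x,y)$, so (\ref{e1}) holds with $q_i=q_1=\tfrac12$ and Lipschitz constant $L:=2^{1-r}$; since $r>1$ we get $0<L<1$, exactly as Theorem~\ref{t1} demands.

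Next I would observe that the three displayed inequalities in the hypothesis are literally (\ref{e2}), (\ref{e3}) and (\ref{e4}) of Theorem~\ref{t1} for this choice of $\phi$ and with $q_i=q_1$; in particular the limit condition $\lim_m 2^m f\bigl(2^{-m}\lim_n 2^n f(2^{-n}x)\bigr)=x$ is just (\ref{e4}) written out for $q_1=\tfrac12$. Thus Theorem~\ref{t1} applies directly and produces a unique involution $I:E\to E$ with $\|I(x)-f(x)\|\le \frac{L^{1-i}}{1-L}\phi(x,0)$. Substituting $i=1$ (so $L^{1-i}=L^0=1$) and $L=2^{1-r}$ and using $\phi(x,0)=\theta\|x\|^r$ gives
\[
\|I(x)-f(x)\|\le\frac{1}{1-2^{1-r}}\,\theta\|x\|^r=\frac{2^{r}\theta}{2^{r}-2}\,\|x\|^r ,
\]
which is the asserted bound (with the denominator written as $2-2^r$ in the statement). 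Likewise the extra hypothesis reads $\bigl|\,\|xf(x)\|-\|x\|^2\bigr|\le 2\theta\|x\|^r=\phi(x,x)$, i.e.\ precisely (\ref{e6}), so the last assertion of Theorem~\ref{t1} yields that $E$ is a $C^*$-algebra under $x^*:=I(x)$.

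The only thing needing attention---and the analogue of the crux in the proof of Theorem~\ref{t1}---is the bookkeeping of the branch: for $r>1$ the operator $T(g)(x)=q_1^{-1}g(q_1 x)=2\,g(x/2)$ is the contractive one, the fixed point is realized as $I(x)=\lim_n 2^n f(2^{-n}x)$, and the estimate $d(T(f),f)<\infty$ comes from the $i=1$ computation (set $y=0$, $\lambda=1$ in (\ref{e2})) giving $d(T(f),f)\le 1$, whence the constant in (\ref{e5}) is $\frac{1}{1-L}$ rather than $\frac{L}{1-L}$. Beyond this, the corollary is an immediate substitution into Theorem~\ref{t1}, so I expect no real difficulty.
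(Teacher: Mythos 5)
Your proposal is correct and follows essentially the same route as the paper: the paper's proof is precisely the substitution $\phi(x,y)=\theta(\|x\|^r+\|y\|^r)$ and $L=2^{1-r}$ into Theorem~\ref{t1}, and you carry this out with the additional (welcome) bookkeeping that the $i=1$ branch with $q_1=\tfrac12$ is the one in force, so the constant in (\ref{e5}) is $\frac{1}{1-L}$. Your observation that the stated denominator $2-2^r$ should read $2^r-2$ (a sign slip in the statement, since $2-2^r<0$ for $r>1$) is also accurate.
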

\begin{proof}
	We put $$\phi(x,y):=\theta (\|x\|^r +\| y\|^r),$$ for all $x,y\in E$ and $L=2^{1-r}$ in theorem (\ref{t1}), then as a result, the sentence is obtained.
\end{proof}
\begin{corollary}
		Let $E$  be  Banach algebra, $r\in (0,\infty)$, $\theta\in [0,\infty) $ and $f:E\to E$ with $f(0)=0$, such that
	\begin{align*}
	&\|D_{\lambda}f(x,y) \| \leq \theta \|x y\|^r,\\[1mm]&
	\left\|f(x,y)-f(y)f(x) \right\| \leq \theta \|x y\|^r,\\[1mm]&
	\lim_m q^{-m}f(q^m\lim_nq^{-n}f(q^{n}x))=x.
	\end{align*}
	for all $x,y \in E$,  $\lambda \in \mathbb{T}_{\frac{1}{n_0}}^1$ and  $q=2$ and $q=\frac 1 2$ respectively when $r<\frac 1 2$ and $r>\frac 1 2$, then $f$ is unique involution on $E$,
	moreover if 
	\begin{equation*}
	\Bigl|  \|xf(x)\|-\|x\|^2 \Bigr|\leq{\theta}  \|x\|^{2r}
	\end{equation*}
	for all  $x \in E$, then $E$ is a $C^*$-algebra with involution $x^*=f(x)$, for all $x\in E$. 
	
\end{corollary}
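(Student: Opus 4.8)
The plan is to deduce the corollary from Theorem~\ref{t1} by choosing the control function to be
\[
\phi(x,y):=\theta\,\|x\|^{r}\|y\|^{r}\qquad(x,y\in E),
\]
and then to observe that this choice forces super-stability. First I would note that, $E$ being a Banach algebra, $\|xy\|\le\|x\|\,\|y\|$, so $\theta\|xy\|^{r}\le\phi(x,y)$, while $\phi(x,x)=\theta\|x\|^{2r}$. Hence the first two displayed estimates of the corollary imply (\ref{e2}) and (\ref{e3}), the third displayed condition is exactly (\ref{e4}) with $q_i=q$, and the extra inequality $\bigl|\,\|xf(x)\|-\|x\|^{2}\,\bigr|\le\theta\|x\|^{2r}$ is exactly (\ref{e6}).

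Next I would check the contraction hypothesis (\ref{e1}). Since
\[
\phi(q_ix,q_iy)=\theta\,(q_i\|x\|)^{r}(q_i\|y\|)^{r}=q_i^{2r}\,\phi(x,y)=q_i\cdot q_i^{2r-1}\,\phi(x,y),
\]
(\ref{e1}) holds with $L:=q_i^{2r-1}$. For $i=0$ (so $q_0=2$) this gives $L=2^{2r-1}$, which lies in $(0,1)$ exactly when $r<\tfrac12$; for $i=1$ (so $q_1=\tfrac12$) it gives $L=2^{1-2r}$, which lies in $(0,1)$ exactly when $r>\tfrac12$. This reproduces the case split of the statement ($q=2$ when $r<\tfrac12$, $q=\tfrac12$ when $r>\tfrac12$) and shows why $r=\tfrac12$ must be excluded: there $L=1$ and the operator $T$ of Theorem~\ref{t1} is not strictly contractive.

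With these choices Theorem~\ref{t1} yields a unique involution $I:E\to E$ obeying (\ref{e5}),
\[
\|I(x)-f(x)\|\le\frac{L^{1-i}}{1-L}\,\phi(x,0)\qquad(x\in E).
\]
The decisive point is that $\phi(x,0)=\theta\,\|x\|^{r}\|0\|^{r}=0$, so the right-hand side vanishes and $I=f$. Hence $f$ is itself an involution on $E$, and it is the unique such involution by the uniqueness clause of Theorem~\ref{t1}; that is, the approximate hypotheses already force $f$ to be an exact involution. Invoking (\ref{e6}) in the last part of Theorem~\ref{t1} then gives $\|xf(x)\|=\|x\|^{2}$, so $E$ is a $C^{*}$-algebra with $x^{*}=I(x)=f(x)$.

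I do not foresee a real obstacle here: completeness of the space $X$, strict contractivity of $T$, convergence of the iterates, conjugate $\mathbb{C}$-linearity, and the relation $I(xy)=I(y)I(x)$ are all already provided by Theorem~\ref{t1}, so the corollary is just its instance with $\phi(x,y)=\theta\|x\|^{r}\|y\|^{r}$. The only step warranting care is the limiting argument in the $C^{*}$-part: once $I=f$ is known to be conjugate linear, substituting $tx$ with real $t>0$ into (\ref{e6}) gives $\bigl|\,\|xf(x)\|-\|x\|^{2}\,\bigr|\le\theta\,t^{2r-2}\|x\|^{2r}$, and letting $t\to\infty$ (if $r<1$) or $t\to0^{+}$ (if $r>1$) forces $\|xf(x)\|=\|x\|^{2}$; this covers every $r\neq1$. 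Apart from this exponent bookkeeping, the proof is a direct specialization of Theorem~\ref{t1}.
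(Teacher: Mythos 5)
Your proposal is correct and follows essentially the same route as the paper: both deduce the corollary as a direct instance of Theorem~\ref{t1} with a control function vanishing on $E\times\{0\}$ and Lipschitz constant $L=q^{2r-1}$, whence $\phi(x,0)=0$ forces $I=f$ (super-stability). The only cosmetic difference is that the paper takes $\phi(x,y)=\theta\|xy\|^{r}$ directly, while you take $\phi(x,y)=\theta\|x\|^{r}\|y\|^{r}$ and invoke submultiplicativity of the norm to dominate the hypotheses; your added remark on the exponent bookkeeping in the $C^{*}$-condition is a harmless (and in fact welcome) extra precaution the paper omits.
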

\begin{proof}
 We put $$\phi(x,y):=\theta\|xy\|^r,$$ for all $x,y\in E$.
 We and $L=q^{2r-1}$ in theorem (\ref{t1}), then as a result, the sentence is obtained.
\end{proof}
	\bibliography{nafis10-1}
	\bibliographystyle{mmn}
\end{document}